\theoremstyle{plain}
\newtheorem{theorem}{Theorem}
\title{The niche graphs of interval orders}
\author{
{\sc Jeongmi PARK}
\footnote{
Department of Mathematics, 
Pusan National University, 
Busan 609-735, Korea. 
}
\footnote{
{\it E-mail address}: {\tt jm1015@pusan.ac.kr}}
\and 
{\sc Yoshio SANO}
\footnote{
Division of Information Engineering, 
Faculty of Engineering, Information and Systems,  
University of Tsukuba, 
Ibaraki 305-8573, Japan.
}
\footnote{
{\it E-mail address}: {\tt sano@cs.tsukuba.ac.jp}} 
}
\date{}
\begin{document}

\maketitle

\begin{abstract}
The \emph{niche graph} 
of a digraph $D$ is the (simple undirected) graph 
which has the same vertex set as $D$ 
and has an edge between two distinct vertices $x$ and $y$ 
if and only if
$N^+_D(x) \cap N^+_D(y) \neq \emptyset$ 
or 
$N^-_D(x) \cap N^-_D(y) \neq \emptyset$, 
where $N^+_D(x)$ (resp. $N^-_D(x)$) 
is the set of out-neighbors (resp. in-neighbors) of $x$ in $D$. 
A digraph $D=(V,A)$ is called a 
\emph{semiorder} (or a \emph{unit interval order}) 
if there exist a real-valued function 
$f:V \to \mathbb{R}$ on the set $V$ and a positive real number 
$\delta \in \mathbb{R}$ 
such that $(x,y) \in A$ if and only if $f(x) > f(y) + \delta$. 
A digraph $D=(V,A)$ is called an 
\emph{interval order} if there exists 
an assignment $J$ 
of a closed real interval $J(x) \subset \mathbb{R}$
to each vertex $x \in V$ 
such that $(x,y) \in A$ if and only if $\min J(x) > \max J(y)$. 

S. -R. Kim and F. S. Roberts characterized the competition graphs of 
semiorders and interval orders in 2002, 
and 
Y. Sano characterized the competition-common enemy graphs of 
semiorders and interval orders in 2010. 
In this note, we give characterizations of the niche graphs 
of semiorders and interval orders. 
\end{abstract}


\textbf{Keywords:} 
competition graph,  
niche graph, 
semiorder, 
interval order. 

\textbf{2010 Mathematics Subject Classification:} 
05C75, 05C20, 06A06. 

\section{Introduction}

J. E. Cohen \cite{Cohen} introduced the notion of 
competition graphs in 1968 in connection with a problem in ecology. 
The \emph{competition graph} $C(D)$ of a digraph $D$ 
is the (simple undirected) graph which 
has the same vertex set as $D$ 
and has an edge between two distinct vertices 
$x$ and $y$ if and only if $N^+_D(x) \cap N^+_D(y) \neq \emptyset$, 
where $N^+_D(x) = \{v \in V(D) \mid (x,v) \in A(D) \}$ 
is the set of out-neighbors of $x$ in $D$. 
(For a digraph $D$, we denote the vertex set and the arc set of $D$ 
by $V(D)$ and $A(D)$, respectively.) 
It has been one of the important research problems in the study of competition 
graphs to characterize the competition graphs of digraphs satisfying 
some specified conditions. 

A digraph $D=(V,A)$ is called a 
\emph{semiorder} (or a \emph{unit interval order}) 
if there exist a real-valued function 
$f:V \to \mathbb{R}$ on the set $V$ and a positive real number 
$\delta \in \mathbb{R}$ 
such that $(x,y) \in A$ if and only if $f(x) > f(y) + \delta$. 
A digraph $D=(V,A)$ is called an 
\emph{interval order} if there exists 
an assignment $J$ 
of a closed real interval $J(x) \subset \mathbb{R}$
to each vertex $x \in V$ 
such that $(x,y) \in A$ if and only if $\min J(x) > \max J(y)$. 
We call $J$ an \emph{interval assignment} of $D$. 
(See \cite{IO} for details on interval orders.) 

A \emph{complete graph} is a graph which has an edge 
between every pair of vertices. 
We denote the complete graph with $n$ vertices by $K_n$. 
An \emph{edgeless graph} is a graph which has no edges. 
We denote the edgeless graph with $n$ vertices by $I_n$. 
The \emph{(disjoint) union} of two graphs $G$ and $H$ 
is the graph $G \cup H$ 
whose vertex set is the disjoint union of the vertex sets of $G$ and $H$ 
and whose edge set is the disjoint union of the edge sets of $G$ and $H$. 

S. -R. Kim and F. S. Roberts characterized the competition graphs of 
semiorders 
and interval orders as follows: 

\begin{theorem}[\cite{KimRoberts}]\label{thm:KR}
Let $G$ be a graph. Then the following are equivalent. 
\begin{itemize}
\item[{\rm (a)}]
$G$ is the competition graph of a semiorder, 
\item[{\rm (b)}]
$G$ is the competition graph of an interval order, 
\item[{\rm (c)}]
$G = K_r \cup I_q$ where if $r \geq 2$ then $q \geq 1$. 
\qed
\end{itemize}
\end{theorem}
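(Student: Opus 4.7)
The plan is to establish the cycle $(a) \Rightarrow (b) \Rightarrow (c) \Rightarrow (a)$.

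The implication $(a) \Rightarrow (b)$ should be immediate: given a semiorder realized by a function $f$ and gap $\delta$, I would assign $J(v) := [f(v), f(v)+\delta]$, making $D$ an interval order, since $f(x) > f(y)+\delta$ if and only if $\min J(x) > \max J(y)$, and the competition graph is unchanged.

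For $(b) \Rightarrow (c)$, I would exploit the well-known linear nesting of out-neighborhoods in an interval order. Given an interval assignment $J$, I would order the vertices $v_1,\ldots,v_n$ so that $\min J(v_1) \leq \cdots \leq \min J(v_n)$. Because $(v,z) \in A$ means $\min J(v) > \max J(z)$, this ordering forces $N^+_D(v_1) \subseteq \cdots \subseteq N^+_D(v_n)$. Letting $k$ be the smallest index with $N^+_D(v_k) \neq \emptyset$, the vertices $v_k,\ldots,v_n$ pairwise share every element of $N^+_D(v_k)$ and therefore form a clique in $C(D)$, while $v_1,\ldots,v_{k-1}$ have empty out-neighborhoods and are isolated. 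Thus $C(D) = K_r \cup I_q$ with $r = n-k+1$ and $q = k-1$. For the side condition, if $r \geq 2$ I would pick any $z \in N^+_D(v_k)$ and observe that $\min J(z) \leq \max J(z) < \min J(v_k)$, so $z$ appears strictly before $v_k$ in the ordering and therefore lies in the isolated block $\{v_1,\ldots,v_{k-1}\}$, giving $q \geq 1$.

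For $(c) \Rightarrow (a)$, I would exhibit an explicit semiorder. If $r \leq 1$ the graph is edgeless, and the arcless semiorder (take $f$ constant, $\delta = 1$) works. If $r \geq 2$ (so $q \geq 1$), I would label the clique vertices $u_1,\ldots,u_r$ and the isolated vertices $w_1,\ldots,w_q$, and set $f(u_i)=2$, $f(w_j)=0$, $\delta=1$. The only arcs are $(u_i,w_j)$: every pair $u_i, u_{i'}$ shares each $w_j$ as a common out-neighbor, while each $w_j$ has $N^+_D(w_j) = \emptyset$ and is therefore isolated in $C(D)$, so $C(D) = K_r \cup I_q$ as required.

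The only slightly delicate point I anticipate is the final clause of $(b) \Rightarrow (c)$: once the nesting is in hand, one has to notice that a witness $z$ of a common prey lies \emph{strictly} below all its predators in the $\min J$-order, and this is precisely what forces an isolated vertex as soon as the clique part is nontrivial. Everything else reduces to bookkeeping with the interval assignment.
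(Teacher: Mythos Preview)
The paper does not supply a proof of this theorem: it is quoted from Kim--Roberts \cite{KimRoberts} and closed with a \qed, so there is no in-paper argument to compare against. Your proposal must therefore be assessed on its own.

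Your cycle $(a)\Rightarrow(b)\Rightarrow(c)\Rightarrow(a)$ is sound. The interval assignment $J(v)=[f(v),f(v)+\delta]$ gives $(a)\Rightarrow(b)$ verbatim. For $(b)\Rightarrow(c)$, ordering by $\min J$ yields the nesting $N^+_D(v_1)\subseteq\cdots\subseteq N^+_D(v_n)$, and once $k$ is the first index with $N^+_D(v_k)\neq\emptyset$ the clique/isolated split follows; your observation that any $z\in N^+_D(v_k)$ satisfies $\min J(z)\le\max J(z)<\min J(v_k)$, hence $z\in\{v_1,\dots,v_{k-1}\}$, correctly forces $q\ge 1$ whenever the clique part is nontrivial. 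The construction in $(c)\Rightarrow(a)$ with $f(u_i)=2$, $f(w_j)=0$, $\delta=1$ is valid because $q\ge 1$ guarantees at least one common prey.

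One small omission: in $(b)\Rightarrow(c)$ you implicitly assume such a $k$ exists. If every $N^+_D(v_i)$ is empty (equivalently $A(D)=\emptyset$), there is no $k$; in that case $C(D)=I_n$, which is $K_r\cup I_q$ with $r\le 1$, and the side condition is vacuous. You should state this degenerate case explicitly before introducing $k$.
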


D. D. Scott \cite{Scott} 
introduced the \emph{competition-common enemy graphs} of digraphs in 1987 
as a variant of competition graphs. 
The \emph{competition-common enemy graph} 
of a digraph $D$ is the graph 
which has the same vertex set as $D$ 
and has an edge between two distinct vertices $x$ and $y$ 
if and only if both 
$N^+_D(x) \cap N^+_D(y) \neq \emptyset$ 
and 
$N^-_D(x) \cap N^-_D(y) \neq \emptyset$ hold, 
where 
$N^-_D(x) = \{v \in V(D) \mid (v,x) \in A(D) \}$ 
is the set of in-neighbors of $x$ in $D$. 

Y. Sano characterized the competition-common enemy graphs of semiorders 
and interval orders as follows: 

\begin{theorem}[\cite{Sano}]
Let $G$ be a graph. Then the following are equivalent. 
\begin{itemize}
\item[{\rm (a)}]
$G$ is the competition-common enemy graph 
of a semiorder, 
\item[{\rm (b)}]
$G$ is the competition-common enemy graph 
of an interval order, 
\item[{\rm (c)}]
$G = K_r \cup I_q$ where if $r \geq 2$ then $q \geq 2$. 
\qed
\end{itemize}
\end{theorem}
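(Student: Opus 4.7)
The plan is to prove the theorem by the cyclic chain (a) $\Rightarrow$ (b) $\Rightarrow$ (c) $\Rightarrow$ (a). The implication (a) $\Rightarrow$ (b) is immediate: a semiorder defined by $(f, \delta)$ is also an interval order via the assignment $J(v) := [f(v), f(v) + \delta]$, so the two digraphs, and hence their competition-common enemy graphs, coincide.

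For (b) $\Rightarrow$ (c), I would let $D$ be an interval order with interval assignment $J$, write $G$ for its competition-common enemy graph, and set $T := \{v \in V(D) \mid N^+_D(v) \neq \emptyset \text{ and } N^-_D(v) \neq \emptyset\}$. The first observation is that every $v \notin T$ is isolated in $G$, since the adjacency condition requires both $N^+_D(x) \cap N^+_D(y) \neq \emptyset$ and $N^-_D(x) \cap N^-_D(y) \neq \emptyset$, and one of these intersections is trivially empty when $v$ has no out-neighbor or no in-neighbor. The crucial step is to show $T$ is a clique: I would pick extremal vertices $v^* \in V(D)$ minimizing $\max J(\cdot)$ and $u^* \in V(D)$ maximizing $\min J(\cdot)$. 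For any $x \in T$ with some out-neighbor $z$ and in-neighbor $w$, the inequalities $\max J(v^*) \leq \max J(z) < \min J(x)$ and $\min J(u^*) \geq \min J(w) > \max J(x)$ give $(x, v^*), (u^*, x) \in A(D)$, so $v^* \in N^+_D(x) \cap N^+_D(y)$ and $u^* \in N^-_D(x) \cap N^-_D(y)$ for any $x, y \in T$. Hence $G = K_{|T|} \cup I_{|V| - |T|}$. Finally I would verify the side condition: $v^*$ has no out-neighbor (otherwise its $\max J$ would be beaten by that neighbor's interval) and similarly $u^*$ has no in-neighbor, so both lie outside $T$; and whenever $|T| \geq 2$, picking any $x \in T$ and chaining the inequalities above yields $\max J(v^*) < \min J(u^*)$, so $(u^*, v^*) \in A(D)$ and in particular $v^* \neq u^*$, giving $|V| - |T| \geq 2$.

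For (c) $\Rightarrow$ (a), I would construct an explicit semiorder. If $r \leq 1$, any edgeless digraph (e.g.\ all $f$-values equal with $\delta = 1$) has an edgeless competition-common enemy graph and hence realizes $G$. If $r \geq 2$ and $q \geq 2$, I would label the clique vertices $x_1, \ldots, x_r$ and the isolated vertices $y_1, \ldots, y_q$, take $\delta := 1$, and assign $f(y_q) := 4$, $f(x_i) := 2$ for each $i$, and $f(y_j) := 0$ for $1 \leq j \leq q - 1$. A direct check then shows that each of $y_1, \ldots, y_{q-1}$ has no out-neighbor and $y_q$ has no in-neighbor, so all $y_j$ are isolated in the competition-common enemy graph, while each pair $x_i, x_j$ shares $y_{q-1}$ as a common out-neighbor and $y_q$ as a common in-neighbor, so $\{x_1, \ldots, x_r\}$ spans the required clique.

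The hard part will be the structural step inside (b) $\Rightarrow$ (c), specifically the observation that a single pair of extremal-interval vertices $v^*, u^*$ simultaneously serves as a universal common out-neighbor and a universal common in-neighbor for every vertex in $T$. Once this dichotomy is in hand, the clique/isolated split and the bound $q \geq 2$ are essentially forced, and the converse construction needs nothing more than two \emph{anchor} isolated vertices, one low and one high relative to the clique.
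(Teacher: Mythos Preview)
The paper does not supply its own proof of this statement: Theorem~2 is quoted from \cite{Sano} and closed immediately with a \qed, so there is no in-paper argument to compare against. That said, your proposal is a complete and correct proof. The cyclic chain is the natural strategy; your extremal choice of $v^*$ (minimizing $\max J$) and $u^*$ (maximizing $\min J$) to serve as universal common out- and in-neighbours for every vertex of $T$ is exactly the right structural lever, and it is the same device the paper itself deploys in its proofs of Theorems~3 and~4 for niche graphs (the quantities $r_1$ and $r_2$ there play the role of your $\max J(v^*)$ and $\min J(u^*)$). Your verification that $v^*\notin T$ and $u^*\notin T$, and that $|T|\ge 1$ already forces $\max J(v^*)<\min J(u^*)$ and hence $v^*\neq u^*$, cleanly delivers the side condition $q\ge 2$ whenever $r\ge 2$. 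The explicit semiorder in (c)~$\Rightarrow$~(a) with values $0,2,4$ and $\delta=1$ checks out: each $x_i$ has $y_q$ as common in-neighbour and any $y_j$ with $j\le q-1$ as common out-neighbour, while every $y_j$ lacks either out- or in-neighbours and is therefore isolated in the competition-common enemy graph.
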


Niche graphs are another variant of competition graphs, 
which were introduced by 
C. Cable, K. F. Jones, J.R. Lundgren, and S. Seager \cite{CJLS}. 
The \emph{niche graph} of a digraph $D$ 
is the graph which has the same vertex set as $D$ 
and has an edge between two distinct vertices $x$ and $y$ 
if and only if 
$N^+_D(x) \cap N^+_D(y) \neq \emptyset$ 
or  
$N^-_D(x) \cap N^-_D(y) \neq \emptyset$.

In this note, we characterize 
the niche graphs of semiorders 
and interval orders. 
As a consequence, 
it turns out that the class of the niche graphs of interval orders 
is larger than the class of the niche graphs of semiorders. 
In fact, the graph $P_3 \cup I_1$ 
(the union of a path with three vertices and an isolated vertex) is 
the niche graph of an interval order, 
but $P_3 \cup I_1$ is not the niche graph of a semiorder.

\section{Main Results}

To state our main results, 
we first recall basic terminology in graph theory. 
The vertex set and the edge set of a graph $G$ 
are denoted by $V(G)$ and $E(G)$, respectively. 
The \emph{complement} of a graph $G$ 
is the graph $\overline{G}$ defined by 
$V(\overline{G}) = V(G)$ and 
$E(\overline{G}) = 
\{v v' \mid v, v' \in V(G), v \neq v', 
v v' \not\in E(G)\}$. 
For positive integers $m$ and $n$, 
the \emph{complete bipartite graph} $K_{m,n}$ 
is the graph defined by 
$V(K_{m,n})=X \cup Y$, where $|X|=m$ and $|Y|=n$, 
and 
$E(K_{m,n})=\{xy \mid x \in X, y \in Y \}$. 
We can observe that $\overline{K_{m,n}}=K_m \cup K_n$. 

The niche graphs of semiorders are characterized as follows. 

\begin{theorem}\label{thm:main-semi}
A graph $G$ is the niche graph of a semiorder 
if and only if 
$G$ is one of the following graphs: 
\begin{itemize}
\item[{\rm (i)}] 
an edgeless graph $I_q$; 
\item[{\rm (ii)}] 
the union of two complete graphs $K_{m} \cup K_{n}$; 
\item[{\rm (iii)}] 
the union of two complete graphs and an edgeless graph 
$K_{m} \cup K_{n} \cup I_q$; 
\item[{\rm (iv)}] 
the complement of the union of a complete bipartite graph and
an edgeless graph $\overline{K_{m,n} \cup I_q}$, 
\end{itemize}
where $m$, $n$, and $q$ are positive integers. 
\end{theorem}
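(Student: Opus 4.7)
The plan is to analyze the niche graph of a semiorder $D = (V, A)$ given by $f \colon V \to \mathbb{R}$ and $\delta > 0$ by studying two natural sets of vertices. Set $m_0 := \min_{v \in V} f(v)$, $M_0 := \max_{v \in V} f(v)$, and define
\[
H := \{v \in V \mid f(v) > m_0 + \delta\}, \qquad L := \{v \in V \mid f(v) < M_0 - \delta\}.
\]
The key claim is: for distinct $u, v \in V$, one has $N^+_D(u) \cap N^+_D(v) \neq \emptyset$ if and only if $\{u, v\} \subseteq H$. Indeed, any vertex $w$ with $f(w) = m_0$ serves as a common out-neighbor of every pair in $H$ (since $f(u), f(v) > m_0 + \delta = f(w) + \delta$); conversely, any common out-neighbor $w$ forces $\min(f(u), f(v)) > f(w) + \delta \geq m_0 + \delta$. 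The symmetric statement for $N^-_D$ and $L$ follows by applying the same argument to an $f$-maximizer. Thus the edges of the niche graph of $D$ form the union of the complete graphs on $H$ and on $L$.

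I would then split on the intersection structure of $H$ and $L$. If $H = L = \emptyset$ (equivalently, $M_0 - m_0 \leq \delta$, so $D$ has no arcs), the niche graph is edgeless, giving case (i). Otherwise $H$ and $L$ are both nonempty, containing any $f$-maximizer and $f$-minimizer respectively. If $H \cap L = \emptyset$, the niche graph is the disjoint union of the cliques on $H$ and $L$ together with the remaining vertices $V \setminus (H \cup L)$ as isolated vertices, producing case (ii) when this remainder is empty and case (iii) otherwise. If $H \cap L \neq \emptyset$, the existence of some $v$ with $m_0 + \delta < f(v) < M_0 - \delta$ forces $M_0 - m_0 > 2\delta$; this makes the conditions $f(v) \leq m_0 + \delta$ and $f(v) \geq M_0 - \delta$ simultaneously unsatisfiable, so $V = H \cup L$. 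Setting $A := H \setminus L$, $B := L \setminus H$, $C := H \cap L$, the niche graph then contains every edge except those between $A$ and $B$, which is precisely $\overline{K_{|A|, |B|} \cup I_{|C|}}$, giving case (iv).

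For the converse, I would realize each family by an explicit semiorder on at most three $f$-levels. Case (i) is immediate from a constant $f$. For case (ii), put $m$ vertices at level $3$ and $n$ vertices at level $0$ with $\delta = 2$; the two groups are exactly $H$ and $L$. For case (iii), additionally place $q$ vertices at level $1.5$ with the same $\delta = 2$, so they lie in neither $H$ nor $L$ and hence are isolated in the niche graph. For case (iv), retain the three-level placement but take $\delta = 1$, so the middle vertices now lie in $H \cap L$; the characterization of niche-graph edges above then yields $\overline{K_{m,n} \cup I_q}$. The only genuinely subtle step is the implication ``$H \cap L \neq \emptyset \Rightarrow V = H \cup L$'' used to rule out extra isolated vertices in case (iv); everything else is organized case analysis and routine verification.
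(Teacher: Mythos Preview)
Your proof is correct and rests on the same underlying observation as the paper's---both pivot on the thresholds $m_0+\delta$ and $M_0-\delta$ (the paper writes $r_1,r_2$ for your $m_0,M_0$)---but the organization is different in a way worth noting. You first isolate a single structural lemma, namely that the niche-graph edge set is exactly the union of the cliques on $H=\{v:f(v)>m_0+\delta\}$ and $L=\{v:f(v)<M_0-\delta\}$, and then branch on whether $H\cap L$ is empty. The paper instead performs a three-way case split on where $r_2-r_1$ lies relative to $\delta$ and $2\delta$, defining a fresh tripartition $V_1,V_2,V_3$ in each case whose meaning changes from case to case; your uniform $H,L$ subsume all of these. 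Your packaging makes the key implication ``$H\cap L\neq\emptyset\Rightarrow V=H\cup L$'' explicit, whereas in the paper it is implicit in the Case~3 partition being exhaustive. The constructions for the converse are essentially identical to the paper's: three $f$-levels with $\delta$ tuned so that the middle level either lands in $H\cap L$ or in neither.
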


\begin{proof}
First, we show the ``only if" part. 
Let $G$ be the niche graph of a semiorder $D$. 
Then there exist a function $f:V(D) \to {\mathbb{R}}$ 
and a positive real number $\delta \in \mathbb{R}_{>0}$ 
such that $A(D) = \{ (x,y) \mid x,y \in V(D), 
f(x) > f(y) + \delta \}$. 
Let 
$r_1$ and $r_2$ be real numbers defined by 
\[
r_1 = \min_{x \in V(D)} f(x) 
\qquad \textrm{and} \qquad 
r_2 = \max_{x \in V(D)} f(x). 
\]
We consider the following three cases: 
(Case 1) $r_1+ \delta \geq r_2$; 
(Case 2) $r_1+ \delta < r_2 \leq r_1+ 2\delta$; 
(Case 3) $r_1+ 2\delta < r_2$.

(Case 1) Consider the case where $r_1+ \delta \geq r_2$. 
In this case, we can observe that $D$ has no arcs. 
Therefore $G$ is an edgeless graph. 

(Case 2) Consider the case where $r_1+ \delta < r_2 \leq r_1+ 2\delta$. 
Note that $r_1 < r_2 - \delta \leq r_1 + \delta <r_2$. 
Let 
$V_1$, $V_2$, and $V_3$ be subsets of $V(D)$ defined by 
\begin{eqnarray*}
V_1 &=& \{ v \in V(D) \mid r_1 \leq f(v) < r_2 - \delta \}, \\
V_2 &=& \{ v \in V(D) \mid r_2 - \delta \leq f(v) \leq r_1 + \delta \}, \\
V_3 &=& \{ v \in V(D) \mid r_1 + \delta < f(v) \leq r_2 \}. 
\end{eqnarray*}
Then it follows that $V(G)=V_1 \cup V_2 \cup V_3$ 
and $V_i \cap V_j = \emptyset$ if $i \neq j$. 
Note that $V_1 \neq \emptyset$ 
since there exists a vertex $x \in V(D)$ 
such that $f(x)=r_1$, 
and that $V_3 \neq \emptyset$ 
since there exists a vertex $x \in V(D)$ 
such that $f(x)=r_2$. 
The set $V_1$ forms a clique in $G$ 
since any vertex in $V_1$ has 
a common in-neighbor which belongs to $V_3$ in $D$. 
The set $V_3$ forms a clique in $G$ 
since any vertex in $V_3$ has 
a common out-neighbor which belongs to $V_1$ in $D$. 
Any vertex in $V_1$ and any vertex in $V_3$ 
are not adjacent in $G$ since 
any vertex in $V_1$ has no out-neighbor in $D$ 
and 
any vertex in $V_3$ has no in-neighbor in $D$. 
Furthermore, any vertex in the set $V_2$ is an isolated vertex in $G$ 
since it has neither an in-neighbor nor an out-neighbor in $D$. 
That is, the set $V_2$ induces an edgeless graph if $V_2 \neq \emptyset$. 
Thus, $G$ is the union of two complete graphs, or
$G$ is the union of two complete graphs and an edgeless graph. 

(Case 3) 
Consider the case where $r_1+ 2\delta < r_2$. 
Note that $r_1 < r_1 + \delta < r_2 - \delta < r_2$. 
Let 
$V_1$, $V_2$, and $V_3$ be subsets of $V(D)$ defined by 
\begin{eqnarray*}
V_1 &=& \{ v \in V(D) \mid r_1 \leq f(v) \leq r_1 + \delta \}, \\
V_2 &=& \{ v \in V(D) \mid r_1 + \delta < f(v) < r_2 - \delta \}, \\
V_3 &=& \{ v \in V(D) \mid r_2 - \delta \leq f(v) \leq r_2 \}. 
\end{eqnarray*}
Then it follows that $V(G)=V_1 \cup V_2 \cup V_3$ 
and $V_i \cap V_j = \emptyset$ if $i \neq j$. 
Note that $V_1 \neq \emptyset$ 
and $V_3 \neq \emptyset$. 
The set $V_2 \cup V_3$ forms a clique in $G$ 
since any vertex in $V_2 \cup V_3$ has 
a common out-neighbor which belongs to $V_1$ in $D$. 
The set $V_1 \cup V_2$ forms a clique in $G$ 
since any vertex in $V_1 \cup V_2$ has 
a common in-neighbor which belongs to $V_3$ in $D$. 
Any vertex in $V_1$ and any vertex in $V_3$ 
are not adjacent in $G$ since 
any vertex in $V_1$ has no out-neighbor in $D$ 
and 
any vertex in $V_3$ has no in-neighbor in $D$. 
Therefore, $G=\overline{K_{m,n} \cup I_q}$ 
where $m=|V_1|$, $n=|V_3|$, and $q=|V_2|$.  
Thus, $G$ is the union of two complete graphs 
if $V_2 = \emptyset$, 
and $G$ is 
the complement of the union of a complete bipartite graph and
an edgeless graph if $V_2 \neq \emptyset$. 

Second, we show the ``if" part. 
Case (i): 
Let $G$ be an edgeless graph. 
We define a function $f:V(G) \to \mathbb{R}$ by
$f(x) = 1$ for all $x \in V(G)$, and let $\delta=1$. 
Then $f$ and $\delta$ gives a semiorder $D=(V,A)$ 
where $V=V(G)$ and $A= \emptyset$, 
and the niche graph of the semiorder $D$ is 
the graph $G$. 
Cases (ii) and (iii): 
Let $G$ be the union of two complete graphs $K$ and $K'$ 
and an edgeless graph $I$, 
where $I$ may possibly be the graph with no vertices. 
We define a function $f:V(G) \to \mathbb{R}$ by 
$f(x) = 1$ if $x \in V(K)$, 
$f(x) = 4$ if $x \in V(K')$, 
$f(x) = 2$ if $x \in V(I)$, 
and let $\delta=2$. 
Then $f$ and $\delta$ gives a semiorder $D=(V,A)$ 
where $V = V(G)$ 
and $A=\{(x, y) \mid x \in V(K'), y \in V(K) \}$, 
and the niche graph of the semiorder $D$ is 
the graph $G$. 
Case 
(iv): 
Let $G = \overline{K_{m,n} \cup I_q}$. 
Let $X$ and $Y$ be the partite sets of the complete bipartite graph $K_{m,n}$ 
and let $Z$ be the vertex set of the edgeless graph $I_q$. 
Then $(X,Y,Z)$ is a tripartition of the vertex set of $G$ 
and 
$E(G)= \{v v' \mid v, v' \in V(G), v \neq v' \}
\setminus 
\{ x y \mid x \in X, y \in Y \}$. 
Now, we define a function $f:V(G) \to \mathbb{R}$ by 
$f(x) = 1$ if $x \in X$, 
$f(z) = 3$ if $z \in Z$, 
$f(y) = 5$ if $y \in Y$, 
and let $\delta=1$. 
Then $f$ and $\delta$ gives a semiorder $D=(V,A)$ 
where $V = V(G)$ 
and 
$A=\{(y, x) \mid x \in X, y \in Y \} 
\cup \{(z, x) \mid x \in X, z \in Z \}
\cup \{(y, z) \mid z \in Z, y \in Y \}$, 
and the niche graph of the semiorder $D$ is 
the graph $G$. 
Hence the theorem holds. 
\end{proof}

The next theorem characterizes 
the niche graphs of interval orders. 

\begin{theorem}\label{thm:main-int}
A graph $G$ is the niche graph of an interval order 
if and only if 
$G$ is one of the following graphs: 
\begin{itemize}
\item[{\rm (i)}] 
an edgeless graph $I_q$; 
\item[{\rm (ii)}] 
the union of two complete graphs $K_{m} \cup K_{n}$; 
\item[{\rm (iii)}] 
the union of two complete graphs and an edgeless graph 
$K_{m} \cup K_{n} \cup I_r$; 
\item[{\rm (iv)}] 
the complement of the union of a complete bipartite graph and
an edgeless graph $\overline{K_{m,n} \cup I_q}$; 
\item[{\rm (v)}] 
the union of an edgeless graph and 
the complement of the union of a complete bipartite graph and 
an edge less graph $I_{r} \cup \overline{K_{m,n} \cup I_{q}}$, 
\end{itemize}
where $m$, $n$, $q$, and $r$ are positive integers. 
\end{theorem}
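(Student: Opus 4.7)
The plan is to follow the template of the proof of Theorem~\ref{thm:main-semi}, but with each vertex now carrying two parameters---the endpoints $L(v)=\min J(v)$ and $R(v)=\max J(v)$ of its interval---in place of the single value $f(v)$. Throughout, set
\[
r^{*} = \min_{v \in V(D)} R(v), \qquad \ell^{*} = \max_{v \in V(D)} L(v).
\]
The key observation that drives the whole argument is that, for distinct $x,x' \in V(D)$, the two vertices share a common out-neighbor in $D$ if and only if $\min(L(x),L(x')) > r^{*}$, and they share a common in-neighbor if and only if $\max(R(x),R(x')) < \ell^{*}$: any vertex realising $r^{*}$ (respectively $\ell^{*}$) serves as a witness whenever the inequality holds, and no such witness can exist when it fails.

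For the ``only if'' direction I would first dispose of the degenerate case $r^{*} \geq \ell^{*}$, in which every interval contains $[\ell^{*},r^{*}]$, so $D$ has no arcs and $G$ is edgeless (case~(i)). Assuming henceforth $r^{*} < \ell^{*}$, I would partition $V(D) = V_{1} \cup V_{2} \cup V_{3} \cup V_{4}$ according to the independent dichotomies ``$L(v) \leq r^{*}$ vs.\ $L(v) > r^{*}$'' and ``$R(v) \geq \ell^{*}$ vs.\ $R(v) < \ell^{*}$'', setting $V_{1}=\{v : L(v)\leq r^{*},\ R(v)\geq \ell^{*}\}$ and $V_{4}=\{v : L(v)>r^{*},\ R(v)<\ell^{*}\}$. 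Applying the key observation yields immediately: every vertex of $V_{1}$ is isolated in $G$; both $V_{2}\cup V_{4}$ and $V_{3}\cup V_{4}$ are cliques of $G$; and no edge of $G$ joins $V_{2}$ to $V_{3}$. These constraints force
\[
G \;=\; I_{|V_{1}|} \;\cup\; \overline{K_{|V_{2}|,|V_{3}|} \cup I_{|V_{4}|}},
\]
and a short case-split on which of the four blocks are empty exhibits $G$ as one of the five listed graph types, after routine identifications such as $\overline{K_{0,n}\cup I_{q}} = K_{n+q}$.

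For the ``if'' direction I would note that every semiorder is an interval order (via $J(v):=[f(v),f(v)+\delta]$), so Theorem~\ref{thm:main-semi} already disposes of cases (i)--(iv), and only case~(v) requires fresh work. Following the blueprint of the partition above, I would assign $r$ ``wide'' intervals $[-1,11]$ for the would-be $V_{1}$ block, $m$ intervals $[6,11]$ for $V_{2}$, $n$ intervals $[-1,4]$ for $V_{3}$, and $q$ point intervals $[5,5]$ for $V_{4}$. A direct computation yields $r^{*}=4$ and $\ell^{*}=6$, and the resulting niche graph is exactly $I_{r}\cup\overline{K_{m,n}\cup I_{q}}$.

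The principal obstacle I anticipate is organisational: keeping straight the case analysis on which of $V_{1},V_{2},V_{3},V_{4}$ vanish and checking that each combination collapses cleanly into one of the five listed forms with positive parameters. The qualitative novelty over Theorem~\ref{thm:main-semi} resides precisely in the simultaneous non-vanishing of $V_{1}$ and $V_{4}$: the ``wide'' intervals producing $V_{1}$ and the short ``middle'' intervals producing $V_{4}$ cannot coexist when every interval is pinned to the common width $\delta$, but they coexist freely once widths are allowed to vary, which is exactly what unshackles case~(v) for general interval orders.
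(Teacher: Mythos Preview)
Your proposal is correct and follows essentially the same approach as the paper: the same extremal parameters (your $r^{*},\ell^{*}$ are the paper's $r_{1},r_{2}$), the same four-block partition (up to relabelling, your $V_{1},V_{2},V_{3},V_{4}$ are the paper's $V_{4},V_{3},V_{1},V_{2}$), and a structurally identical explicit construction for the ``if'' direction, with the harmless shortcut of citing Theorem~\ref{thm:main-semi} for cases (i)--(iv). One minor sharpening: once $r^{*}<\ell^{*}$, your blocks $V_{2}$ and $V_{3}$ are automatically nonempty (they contain the vertices realising $\ell^{*}$ and $r^{*}$ respectively), so the case-split is only over whether $V_{1}$ and $V_{4}$ vanish and the identification $\overline{K_{0,n}\cup I_{q}}=K_{n+q}$ is never actually needed.
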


\begin{proof}
For positive integers $m$ and $n$ and 
non-negative integers $q$ and $r$, 
let 
\[
\Gamma(m,n,q,r) = \overline{K_{m,n} \cup I_{q}} \cup I_{r}. 
\]
We remark that 
$\Gamma(m,n,0,0)= K_{m} \cup K_{n}$, 
$\Gamma(m,n,0,r)= K_{m} \cup K_{n} \cup I_r$, and 
$\Gamma(m,n,q,0)= \overline{K_{m,n} \cup I_q}$. 

First, we show the ``only if" part. 
Let $G$ be the niche graph of an interval order $D$. 
Then there exists an interval assignment 
$J$ 
of $D$. 
Let 
$r_1$ and $r_2$ be real numbers defined by 
\[
r_1 = \min_{x \in V(D)} \max J(x) 
\qquad \textrm{and} \qquad 
r_2 = \max_{x \in V(D)} \min J(x). 
\]
If $r_1 \geq r_2$, then 
we can observe that $D$ has no arcs 
and therefore $G$ is an edgeless graph. 
Now, we consider the case where $r_1 < r_2$. 
Note that $|V(G)| \geq 2$ since $r_1$ and $r_2$ 
are attained by different vertices. 
Let 
$V_1$, $V_2$, $V_3$, and $V_4$ be subsets of $V(D)$ 
defined by 
\begin{eqnarray*}
V_1 &=& \{ v \in V(D) \mid \min J(v) \leq r_1 \leq \max J(v) < r_2 \}, \\
V_2 &=& \{ v \in V(D) \mid r_1 < \min J(v), \max J(v) < r_2 \}, \\
V_3 &=& \{ v \in V(D) \mid r_1 < \min J(v) \leq r_2 \leq \max J(v) \}, \\
V_4 &=& \{ v \in V(D) \mid \min J(v) \leq r_1, r_2 \leq \max J(v) \}. 
\end{eqnarray*}
Then it follows that $V(G)=V_1 \cup V_2 \cup V_3 \cup V_4$ 
and $V_i \cap V_j = \emptyset$ if $i \neq j$. 
Note that $V_1 \neq \emptyset$ 
since there exists a vertex $x \in V(D)$
such that $\max J(x) = r_1$, 
and that 
$V_3 \neq \emptyset$ 
since there exists a vertex $x \in V(D)$
such that $\min J(x) = r_2$. 
The set $V_2 \cup V_3$ forms a clique in $G$ 
since any vertex in $V_2 \cup V_3$ has 
a common out-neighbor which belongs to $V_1$ in $D$. 
The set $V_1 \cup V_2$ forms a clique in $G$ 
since any vertex in $V_1 \cup V_2$ has 
a common in-neighbor which belongs to $V_3$ in $D$. 
Any vertex in $V_1$ and any vertex in $V_3$ 
are not adjacent in $G$ since 
any vertex in $V_1$ has no out-neighbor in $D$ 
and 
any vertex in $V_3$ has no in-neighbor in $D$. 
Therefore, the set $V_1 \cup V_2 \cup V_3$ 
induces the graph 
$\overline{K_{m,n} \cup I_q}$ 
where $m=|V_1|$,  $n=|V_3|$, and $q=|V_2|$. 
Furthermore, any vertex in the set $V_4$ is an isolated vertex in $G$ 
since it has neither an in-neighbor nor an out-neighbor in $D$. 
That is, the set $V_4$ induces an edgeless graph. 
Thus $G$ is 
the graph $\Gamma(m,n,q,r)$ with 
$m=|V_1|$, $n=|V_3|$, $q=|V_2|$, and $r=|V_4|$. 

Second, we show the ``if" part. 
Case 
(i): 
Let $G$ be an edgeless graph. 
We define an interval assignment 
$J$ 
by $J(x) = [1,2]$ for all $x \in V(G)$, 
where $[a,b]$ denotes the closed real interval 
$\{r \in \mathbb{R} \mid a \leq r \leq b \}$. 
Then $J$ gives an interval order $D=(V,A)$ 
where $V=V(G)$ and $A= \emptyset$, 
and the niche graph of the semiorder $D$ is 
the graph $G$. 
Cases 
(ii)-(v): 
Let $G$ be the graph $\Gamma(m,n,q,r)$ 
for some positive integers $m$ and $n$ 
and non-negative integers $q$ and $r$. 
Then, there exists 
a partition $(U_1,U_2,U_3,U_4)$ of the vertex set of $G$ 
such that 
$E(G)= \{v v' \mid v, v' \in U_1 \cup U_2 \cup U_3, v \neq v' \}
\setminus 
\{ u_1 u_3 \mid u_1 \in U_1, u_3 \in U_3 \}$. 
Note that $\{|U_1|,|U_3|\}=\{m,n\}$, $|U_2|=q$, and $|U_4|=r$. 
Now, we define an interval assignment 
$J$  
as follows: 
$J(x) = [1,2]$ if $x \in U_1$; 
$J(x) = [3,4]$ if $x \in U_2$; 
$J(x) = [5,6]$ if $x \in U_3$; 
$J(x) = [1,6]$ if $x \in U_4$. 
Then $J$ gives an interval order $D=(V,A)$ 
where $V = V(G)$ 
and $A=\{(x, y) \mid x \in U_i, y \in U_j, 
(i,j) \in \{(3,2), (3,1), (2,1) \} \}$, 
and the niche graph of the interval order $D$ is 
the graph $G$. 
Hence the theorem holds. 
\end{proof}


\end{document}